\documentclass[10pt,reqno]{amsart}
\usepackage[T1]{fontenc}
\usepackage{lmodern}
\usepackage{microtype}
\usepackage[letterpaper,margin=1in]{geometry}
\usepackage{amsmath,amssymb,amsthm,mathtools}
\usepackage[hidelinks]{hyperref}
\hypersetup{pdftitle={The norm of the centered Hardy-Littlewood maximal operator},pdfauthor={Jose Madrid}}
\newtheorem{theorem}{Theorem}[section]
\newtheorem{proposition}[theorem]{Proposition}
\newtheorem{lemma}[theorem]{Lemma}
\newtheorem{corollary}[theorem]{Corollary}
\theoremstyle{remark}

\numberwithin{equation}{section}
\newcommand{\R}{\mathbb R}
\newcommand{\Z}{\mathbb Z}
\newcommand{\N}{\mathbb N}
\newcommand{\E}{\mathbb E}

\newcommand{\ind}{\mathbf 1}
\newcommand{\HLcenter}{M_{\mathrm c}}
\newcommand{\HLdyad}{M_{\mathrm{dyad}}}
\newcommand{\norm}[1]{\left\lVert #1\right\rVert}
\DeclareMathOperator{\dist}{dist}
\title[The centered maximal operator on the line]{The norm of the centered Hardy--Littlewood maximal operator}
\subjclass[2020]{Primary 42B25; Secondary 60G42}
\keywords{Hardy--Littlewood maximal operator, optimal constant, finite martingales, extremizing sequences}

\begin{document}

\author[Jos\'e Madrid]{Jos\'e Madrid}

\address{Department of Mathematics, Virginia Polytechnic Institute and State University, 225 Stanger Street, Blacksburg, VA 24061-1026, USA}
	\email{josemadrid@vt.edu}

\begin{abstract}
We determine the exact $L^p(\R)$ norm of the centered Hardy--Littlewood maximal operator for every $1<p<\infty$, proving that it equals $p/(p-1)$. This settles the sharp strong-type problem for the centered maximal operator on the real line. For every $p$ in this range, the norm is strictly larger than the constant associated with the critical power $|x|^{-1/p}$, thereby disproving the conjecture of Dror, Ganguli, and Strichartz that this profile determines the unrestricted norm.
\end{abstract}
\maketitle

\section{Introduction}

\subsection{The centered maximal inequality}
For $f\in L^1_{\mathrm{loc}}(\R)$, the centered Hardy--Littlewood maximal function is defined by
\begin{equation}\label{eq:defmax}
 \HLcenter f(x)=\sup_{r>0}\frac1{2r}\int_{x-r}^{x+r}|f(y)|\,dy.
\end{equation}
The Hardy--Littlewood maximal theorem \cite{HL} gives the boundedness of $\HLcenter$ on $L^p(\R)$ for $1<p\le\infty$, together with a weak-type estimate at $p=1$. The purpose of this paper is to determine the optimal constant in the strong-type inequality. We write
\begin{equation}\label{eq:defnorm}
 p'=\frac p{p-1},\qquad
 N_p=\sup_{0\ne f\in L^p(\R)}
       \frac{\norm{\HLcenter f}_{L^p(\R)}}{\norm{f}_{L^p(\R)}},
 \qquad 1<p<\infty.
\end{equation}
A classical consequence of the sharp one-sided maximal inequalities is
\begin{equation}\label{eq:classicalupper}
 N_p\le p'.
\end{equation}
Indeed, the left and right maximal operators have norm $p'$, while the centered maximal function is bounded pointwise by their arithmetic mean. The one-sided theorem originates in Hardy and Littlewood \cite{HL}; Riesz \cite{Riesz} gave the rising-sun argument that yields the sharp estimate. An explicit $L^p$ formulation appears in Phillips \cite[Theorem~3.2(i), p.~652]{Phillips}. We recall the argument in Section~\ref{sec:upper}.

The issue is sharpness. Centered intervals impose a common radius on the left and right averages, and the familiar examples built from a single power singularity give a smaller constant. We construct functions with values arranged on many separated scales and prove that their norm ratios approach the upper bound in \eqref{eq:classicalupper}.

\subsection{Main results}
Our main theorem determines the exact operator norm of $\HLcenter$ on $L^p(\R)$.

\begin{theorem}\label{thm:main}
Let $1<p<\infty$. Then
\begin{equation}\label{eq:main}
 \norm{\HLcenter}_{L^p(\R)\to L^p(\R)}=\frac p{p-1}.
\end{equation}
For every $\varepsilon>0$, there is a nonzero, nonnegative $f\in C_c^\infty(\R)$ such that
\begin{equation}\label{eq:smoothsharp}
 \norm{\HLcenter f}_p>\left(\frac p{p-1}-\varepsilon\right)\norm{f}_p.
\end{equation}
\end{theorem}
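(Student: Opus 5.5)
The upper bound $N_p\le p'$ is \eqref{eq:classicalupper}, which I take as known from Section~\ref{sec:upper}: $\HLcenter f\le\tfrac12(M_-f+M_+f)$ pointwise, each one-sided operator has norm $p'$ by the rising-sun argument, and Minkowski's inequality finishes. Everything therefore rests on the lower bound \eqref{eq:smoothsharp}. I would get it from a martingale model, a multi-scale construction on $\R$ that realizes the model, and a final smoothing step.

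\textbf{Step 1 (martingale model).} Doob's maximal inequality has sharp constant $p'$. So for every $\varepsilon>0$ there is a finite martingale $(F_n)_{n=0}^N$ with $\|\max_n F_n\|_p>(p'-\varepsilon)\|F_N\|_p$. I want it in a special form: nonnegative and multiplicative, $F_n=\prod_{k\le n}\xi_k$, with the $\xi_k$ independent, mean one, and taking finitely many values. A concrete choice imitates the continuous extremal $x^{-1/p}$. At each step, with probability $1-\delta$ the factor $\xi_k$ resets to a small value, and with probability $\delta$ it multiplies by $\lambda$, where $\lambda^p\delta\approx 1$ up to a $1+\eta$ correction. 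Then $\max_n F_n$ is essentially the running value at the last time before the reset, and a geometric-series computation with $N\to\infty$, then $\delta\to0$, should recover $p'$ in the limit. This is the same computation that shows the dyadic maximal operator has norm $p'$.

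\textbf{Step 2 (realization on $\R$ with separated scales).} Choose scales $1=\ell_0\gg\ell_1\gg\dots\gg\ell_N$ with $\ell_{k}/\ell_{k+1}=K$ huge. Let $g_k$ be an $\ell_{k-1}$-periodic step function whose value distribution equals that of $\xi_k$. Its pieces should have length comparable to $\ell_k$, or smaller, arranged finely in each period. Set
\begin{equation*}
f=\ind_{[0,L]}\prod_{k=1}^N g_k
\end{equation*}
with $L\gg1$. Because the scales are separated, for Lebesgue-a.e.\ $x$ and any radius $r$ with $\ell_{n}\ll r\ll\ell_{n-1}$, the window $(x-r,x+r)$ sees $g_1,\dots,g_{n-1}$ as essentially constant. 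It also sees $g_n,\dots,g_N$ averaged over many full periods, and those averages equal $1$ by the mean-one condition. Hence
\begin{equation*}
\frac1{2r}\int_{x-r}^{x+r}f=\prod_{k<n}g_k(x)\,(1+O(\text{error})).
\end{equation*}
This gives $\HLcenter f(x)\ge(1-o(1))\max_n\prod_{k<n}g_k(x)$ off an exceptional set. The exceptional set consists of points within $\ell_{n-1}/K^{1/2}$ of a jump of $g_{n-1}$, together with the boundary of $[0,L]$. Its measure is $O(NK^{-1/2}L)$. The joint distribution of $(g_1(x),\dots,g_N(x))$ under normalized Lebesgue measure on $[0,L]$ is exactly the product law of the $\xi_k$. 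So $\|f\|_p^p=L\,\E F_N^p$ and $\|\HLcenter f\|_p^p\ge(1-o(1))L\,\E\max_nF_n^p$.

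\textbf{Step 3 (error control, the main obstacle).} The delicate point is that the exceptional sets and the boundary errors must be negligible in $L^p$, not merely in measure. This matters because $\max F_n$ is large on small sets, and those are exactly where Doob's extremal concentrates. I would handle it in two moves. First, fix the martingale from Step 1, so that $N$ and all values are bounded. Only then send $K\to\infty$ and $L\to\infty$. All quantities are bounded by constants depending only on the martingale, so measure-smallness implies $L^p$-smallness. The independence of $g_n$ from the coarser factors, obtained by equidistributing each period's pattern, must be checked exactly; I would arrange it by making every period of $g_{n}$ an exact scaled copy.

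\textbf{Step 4 (smoothing).} Finally, mollify $f$ at a scale $\ll\ell_N$ and take $|f|$. We have $\HLcenter(f*\phi)\to\HLcenter f$ in the relevant sense: lower semicontinuity plus $L^p$ convergence of $f*\phi\to f$ suffices for the lower bound via Fatou. This yields a nonnegative $C_c^\infty$ function satisfying \eqref{eq:smoothsharp}. Combined with \eqref{eq:classicalupper}, this gives \eqref{eq:main}.
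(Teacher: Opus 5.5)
Your upper bound, the error control in Step 3 (fix the martingale, then send $K,L\to\infty$, using boundedness to turn small measure into small $L^p$ norm), and the Fatou smoothing argument are all fine. The gap is in Step 1, and Step 2 inherits it. Your realization $f=\ind_{[0,L]}\prod_k g_k$ only produces averages $\prod_{k<n}g_k(x)$. It can therefore only realize product martingales $F_n=\prod_{k\le n}\xi_k$ with independent factors, and you give no argument that such products come close to Doob's constant. There is good reason to think they do not.

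Near-extremals for Doob's inequality grow slowly, then at a random time drop to about $(1-1/p)$ times their running maximum and \emph{stay there}. With independent factors, the later factors keep acting on the dropped value, so $F_N/\max_nF_n$ cannot concentrate. For instance, the continuous analogue of such a product is geometric Brownian motion $F_t=\exp(\mathcal W_t-t/2)$. Tilting by $F_T^p/\E F_T^p$ gives $\E\sup_{t\le T}F_t^p/\E F_T^p\uparrow(2p-1)/(p-1)$, which is strictly less than $(p')^p=(1+\frac1{p-1})^p$ by Bernoulli's inequality.

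Your concrete model is also in the wrong regime. With $\lambda^p\delta\approx1$ and $\delta\to0$, the mean-one condition forces the reset value $(1-\delta\lambda)/(1-\delta)\to1$, so it is not small, and the ratio tends to $1$. Even if you read the reset as freezing, the ratio tends (as $N\to\infty$) to $c^{-p}$ with $c=(1-\delta^{1-1/p})/(1-\delta)$. This approaches $(p')^p$ only as $\delta\to1$, not as $\delta\to0$.

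The missing idea is to realize an arbitrary function of the scale-separated coordinates rather than a product. Take digit functions $d_1,\dots,d_n$ at widely separated base-$B$ positions, and set $h=F(d_1,\dots,d_n)$ for an arbitrary $F\ge0$ on $\{0,\dots,B-1\}^n$. Choose a radius $r_k$ small compared with the cells on which $d_1,\dots,d_k$ are constant, but with $2r_k$ an integer multiple of the period of $(d_{k+1},\dots,d_n)$. Then $A_{r_k}h(x)=F_k(d_1(x),\dots,d_k(x))$ exactly away from a small set, because the future digits are jointly uniform over each period. This is your Step 2 with ``the averages equal $1$'' replaced by ``the average is the conditional expectation.''

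That frees you to use the stopped martingale. With probability $q=1-1/B$, multiply by $a=q^{-1/p}$; at the first failure, drop to $ba^k$ with $b=(1-q^{1-1/p})/(1-q)$, and freeze there. Then $\E F_*^p/\E F^p=(1+n/B)/(1+nb^p/B)$, which tends to $b^{-p}$ as $n\to\infty$ and then to $(p')^p$ as $B\to\infty$. With this replacement, your Steps 3 and 4 finish the proof, with the limits taken in the order: spatial parameters, then $n$, then $B$.
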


The extremizing functions are constructed from finite martingales. Their values occur on many separated spatial scales, and finitely many centered averages recover the successive conditional means. The construction gives a concrete sequence $f_B$, indexed by integers $B\ge2$, for which
\[
 \lim_{B\to\infty}\frac{\norm{\HLcenter f_B}_p}{\norm{f_B}_p}=p'.
\]
The formula for $f_B$ and the finite lower bound underlying this limit are given in Section~\ref{sec:proof}.

Only a sparse set of radii is needed. Define
\begin{equation}\label{eq:dyadic}
 \HLdyad f(x)=\sup_{j\in\Z}\frac1{2^{j+1}}
   \int_{x-2^j}^{x+2^j}|f(y)|\,dy.
\end{equation}
Here the intervals are centered at $x$ and their radii are powers of two.

\begin{corollary}\label{cor:dyadic}
For every $1<p<\infty$,
\begin{equation}\label{eq:dyadicnorm}
 \norm{\HLdyad}_{L^p(\R)\to L^p(\R)}=\frac p{p-1}.
\end{equation}
\end{corollary}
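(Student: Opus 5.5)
The corollary should follow directly from Theorem~\ref{thm:main} together with the pointwise comparison $\HLdyad f\le \HLcenter f$. The plan has two halves: an upper bound and a matching lower bound.

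\textbf{Upper bound.} Every interval $[x-2^j,x+2^j]$ used in \eqref{eq:dyadic} is one of the centered intervals admitted in \eqref{eq:defmax}, with $r=2^j$. Hence
\[
\HLdyad f(x)\le \HLcenter f(x)
\]
for every $x$. Combining this with \eqref{eq:classicalupper} gives
\[
\norm{\HLdyad}_{L^p\to L^p}\le N_p\le p'.
\]

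\textbf{Lower bound.} I would not use \eqref{eq:smoothsharp} as a black box, since it concerns $\HLcenter$. Instead I would exploit dilation invariance together with the finiteness of radii in the construction. The introduction states that for the extremizers $f_B$, "finitely many centered averages recover the successive conditional means", and that the finite lower bound of Section~\ref{sec:proof} only involves a finite set of radii $r_1,\dots,r_K$.

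\emph{Step 1 (rescaling).} Rescaling by $\delta_\lambda f(x)=f(\lambda x)$ preserves the ratio $\norm{\HLcenter f}_p/\norm f_p$ and maps the radii $r_i$ to $r_i/\lambda$. The available centered-average lower bound is monotone in its radius set: any radius set whose averages approximate those at $r_i$ gives almost the same bound.

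\emph{Step 2 (replace radii by powers of two).} If the construction allows free choice of the scale ratio between levels, I would choose it to be a power of $2$, for instance scale separations $2^{m}$ with $m$ large. After a single dilation the radii become exact powers of two, so the lower bound for $\HLcenter f_B$ transfers verbatim to $\HLdyad f_B$.

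Failing that, I would use the following approximation argument. The averages $r\mapsto \frac1{2r}\int_{x-r}^{x+r}|f|$ change by a factor at most $2$ when $r$ is moved to the nearest larger power of two, which loses a constant. To avoid that loss, I would average over dilations. Consider
\[
\int_0^1 \norm{\HLdyad (\delta_{2^t}f)}_p^p\,2^{t}\,dt .
\]
The dyadic radii $2^{j-t}$, $t\in[0,1]$, sweep all radii. Each good radius $r_i$ is therefore hit within relative error $\eta$ on a set of $t$ of measure about $\eta$, which alone is insufficient.

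The cleaner route is to check directly in Section~\ref{sec:proof} that the lower bound is stable. The key point is that the bound used at each point is the average over an interval whose endpoints lie in gaps between the separated scales where $f_B$ vanishes or is negligible. Consequently the average is essentially unchanged if the radius varies within a factor range $[r_i, c\,r_i]$ with $c>2$, provided the separation parameter is large. Such a range always contains a power of two, so $\HLdyad f_B\ge(1-o(1))\,\HLcenter$-lower bound pointwise.

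\emph{Step 3 (conclude).} Letting $B\to\infty$ then gives $\norm{\HLdyad}\ge p'$.

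\textbf{Main obstacle.} The main obstacle is Step 2: verifying that the finite lower bound for $f_B$ in Section~\ref{sec:proof} is robust under perturbing each radius by a factor up to $2$. I would first check whether the author's construction already uses radii of the form $B^k$ (or can be chosen with $B$ a power of $2$). If so, the corollary is immediate after restricting to $B=2^m$, since a subsequence of $B\to\infty$ suffices.
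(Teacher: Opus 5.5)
Your main route is essentially the paper's proof. The upper bound comes from $\HLdyad f\le\HLcenter f$ and Proposition~\ref{prop:upper}. For the lower bound, the construction's radii are $r_k=B^sL^{-k}=B^{4-8k}$, so restricting to $B=2^m$ makes every selected radius a power of two. Since \eqref{eq:explicitlower} uses only $\max_k A_{r_k}$, it transfers verbatim to $\HLdyad f_B$, and one lets $B\to\infty$ along powers of two.

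The dilation and radius-robustness fallbacks are therefore unnecessary. The heuristic you give for robustness is also inaccurate: $f_B\ge b_B>0$ on its support, so it does not vanish in any gaps. Stability under enlarging a radius would instead come from two facts. First, nonnegativity lets you discard a partial period. Second, the enlarged interval must stay inside a cell of constant past digits.
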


Thus restricting the available radii to this geometric sequence leaves the norm unchanged. The proof uses the same functions as Theorem~\ref{thm:main}, with the digit base restricted to powers of two.

\subsection{Historical background}\label{subsec:history}

For the uncentered maximal operator $M_{\mathrm u}$, Grafakos and Montgomery-Smith \cite{GM} found that the optimal $L^p(\R)$ constant $U_p$ is the unique positive root of
\begin{equation}\label{eq:uncentered}
 (p-1)U_p^p-pU_p^{p-1}-1=0.
\end{equation}
The elementary pointwise comparison $\tfrac12 M_{\mathrm u}f\le\HLcenter f\le M_{\mathrm u}f$ consequently gives $\tfrac12 U_p\le N_p\le U_p$. The upper bound \eqref{eq:classicalupper} is smaller, since $U_p>p'$.

Grafakos and Kinnunen \cite[Corollary~2.3]{GK} extended the uncentered estimate with constant $U_p$ to locally finite Borel measures on the line. This estimate is sharp uniformly over the measures, since Lebesgue measure gives equality at the level of operator norms. They also obtained centered estimates in every dimension \cite[Theorem~3.2 and Corollary~3.3]{GK}. Their bound on $L^p(\R^d,\mu)$ is $A_{p,d}$, where
\begin{equation}\label{eq:GKcentered}
 (p-1)A_{p,d}^p-pA_{p,d}^{p-1}-(\beta_d-1)=0
\end{equation}
and $\beta_d$ is a Besicovitch covering constant. In dimension one one may take $\beta_1=2$, which gives $A_{p,1}=U_p$. These results address estimates uniform over general measures; the bound $p'$ in \eqref{eq:classicalupper} uses the equal Lebesgue lengths of the two halves of a centered interval.

A more specific lower bound comes from homogeneous functions. Christ and Grafakos \cite[Section~3]{CG} and, independently, Dror, Ganguli, and Strichartz \cite[Theorem~3.2]{DGS} proved that $N_p\ge c_p$, where
\begin{equation}\label{eq:peak}
 c_p=p'\sup_{t>1}\frac{(t+1)^{1-1/p}+(t-1)^{1-1/p}}{2t}.
\end{equation}
This constant is characterized by $\HLcenter(|x|^{-1/p})=c_p|x|^{-1/p}$. Although the power itself is outside $L^p(\R)$, truncations give admissible functions whose norm ratios approach $c_p$. Dror, Ganguli, and Strichartz combined this lower bound with a numerical search and conjectured that $N_p=c_p$.

Grafakos, Montgomery-Smith, and Motrunich \cite{GMM} proved the conjectured inequality for nonnegative functions convex on each side of one point, with optimal constant $c_p$. Their proof uses a variational argument and the structure of the radius at which the maximal average is attained. For arbitrary functions, discontinuities of this radius introduce additional terms in the changes of variables, and their argument does not control these terms. Their theorem therefore gives a sharp inequality on the convex class, together with a precise indication of the difficulty in extending that approach.

The estimates above leave the interval
\begin{equation}\label{eq:earliergap}
 c_p\le N_p\le p'.
\end{equation}
In particular, the calculation in \cite[Lemma~3.3]{DGS} gives $c_2=27^{1/4}/\sqrt2$, so that
\begin{equation}\label{eq:p2gap}
 \frac{27^{1/4}}{\sqrt2}\le N_2\le2.
\end{equation}
Theorem~\ref{thm:main} identifies the upper endpoint in \eqref{eq:earliergap} as the norm for every $p$. The strict inequality $c_p<p'$ is proved in the final section.

The centered weak-type $(1,1)$ problem has a separate history. Dror, Ganguli, and Strichartz \cite{DGS} conjectured that its optimal constant $W$ is $3/2$. Aldaz \cite{Aldaz} disproved that value and obtained
\[
 \frac{37}{24}\le W\le\frac{9+\sqrt{41}}8.
\]
Melas \cite{Melas} eventually determined the exact constant
\[
 W=\frac{11+\sqrt{61}}{12}.
\]
These endpoint estimates concern the measures of level sets. The exact strong-type norm remained undetermined, as recorded in Ciccone and Wr\'obel \cite[Section~1.1]{CW}. Our proof obtains it by realizing finite conditional expectations at selected centered radii.

\subsection{Martingales and centered averages}
The lower bound rests on a realization principle. Let $F\ge0$ be a nonzero function on the uniform probability space $\Omega=\{0,\ldots,B-1\}^n$, and let $F_k$ denote its conditional expectation given the first $k$ coordinates. Lemma~\ref{lem:realization} shows that
\[
 N_p\ge
 \frac{\bigl\|\max_{0\le k\le n}F_k\bigr\|_{L^p(\Omega)}}
      {\norm{F}_{L^p(\Omega)}}.
\]
The same constant $p'$ is the optimal constant in Doob's
$L^p$ martingale maximal inequality \cite{Doob}. We use a
family of finite stopped martingales to approach this constant,
computing their moments explicitly in
Section~\ref{sec:martingale}.

To obtain the realization, we encode the coordinates of $F$ at widely separated positions in the base-$B$ expansion of a point $x\in[0,1)$. The resulting periodic function has exactly the distribution of $F$. For each $k$, we choose a radius small enough to preserve the first $k$ selected digits at most centers, but large enough to average the remaining digits over an integer number of periods. At every such center, the centered average is precisely $F_k$.

The distribution of the admissible centers is the useful additional feature of the construction. The conditions that keep an averaging interval inside a region of constant past digits can be expressed using blocks of unused digits. These blocks are disjoint from the selected coordinates and from one another. Counting finite base-$B$ words therefore shows that every selected configuration retains exactly the same proportion of centers. This gives a multiplicative estimate whose relative loss is independent of the size of $F$.

That independence allows the martingale depth and the spatial parameters to vary together. We take depth $n=B^2$ and a fixed spacing between selected digit positions, then truncate the periodic function on a sufficiently long interval. Both spatial losses tend to zero as $B\to\infty$, while the martingale ratio tends to $p'$. This produces the explicit extremizing sequence. Approximation in $L^p$ then gives the smooth functions in Theorem~\ref{thm:main}.

Section~\ref{sec:upper} recalls the upper bound. Section~\ref{sec:realization} proves the realization lemma, including its exact distribution identity and a two-coordinate example. Section~\ref{sec:martingale} constructs the finite martingale, and Section~\ref{sec:proof} combines the two ingredients. The final section proves $c_p<p'$ and relates the result to the earlier critical-power examples.

\subsection{Notation and conventions}
Throughout, $1<p<\infty$ is fixed unless otherwise stated. Integrals on $\R$ are taken with respect to Lebesgue measure, $|E|$ denotes the measure of a measurable set, and $\N=\{1,2,\ldots\}$. On finite probability spaces, expectations refer to the specified uniform measure. For a nonnegative function $h$, write
\begin{equation}\label{eq:average}
 A_rh(x)=\frac1{2r}\int_{x-r}^{x+r}h(y)\,dy.
\end{equation}
When $h$ is initially defined on $[0,1)$, this average uses its periodic extension to $\R$. We write $\{t\}=t-\lfloor t\rfloor$ for the fractional part. Digits and periodic step functions are defined on half-open intervals; the values at the endpoints of a compact-support cutoff have no effect on the averages or norms.

\section{The one-sided upper bound}\label{sec:upper}

We record the classical upper estimate to fix its attribution and make the proof self-contained. The sharp one-sided inequality is stated in \cite[Theorem~3.2(i)]{Phillips}; the argument below uses Riesz's rising-sun method \cite{Riesz}.

For $f\ge0$, define
\[
 M_+f(x)=\sup_{r>0}\frac1r\int_x^{x+r}f(y)\,dy,
 \qquad
 M_-f(x)=\sup_{r>0}\frac1r\int_{x-r}^{x}f(y)\,dy.
\]

\begin{proposition}\label{prop:upper}
For every $f\in L^p(\R)$,
\begin{equation}\label{eq:upper}
 \norm{\HLcenter f}_p\le p'\norm{f}_p.
\end{equation}
\end{proposition}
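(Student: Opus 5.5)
We may assume $f\ge0$, since $\HLcenter f=\HLcenter|f|$. The first step is the pointwise comparison
\[
 \HLcenter f(x)\le \tfrac12\bigl(M_+f(x)+M_-f(x)\bigr).
\]
It holds because for each $r>0$
\[
 \frac1{2r}\int_{x-r}^{x+r}f=\frac12\cdot\frac1r\int_x^{x+r}f+\frac12\cdot\frac1r\int_{x-r}^x f\le \tfrac12\bigl(M_+f(x)+M_-f(x)\bigr),
\]
and we then take the supremum over $r$. Minkowski's inequality gives $\norm{\HLcenter f}_p\le\frac12(\norm{M_+f}_p+\norm{M_-f}_p)$. The reflection $x\mapsto -x$ interchanges $M_+$ and $M_-$ and preserves $L^p$ norms, so it suffices to prove $\norm{M_+f}_p\le p'\norm{f}_p$.

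For the one-sided bound, the key ingredient is Riesz's rising-sun lemma in its sharp weak-type form. For $\lambda>0$,
\[
 \lambda\,|\{M_+f>\lambda\}|=\int_{\{M_+f>\lambda\}}f .
\]
We first treat $f\in C_c(\R)$, or $f\in L^1\cap L^p$, and extend by monotone convergence afterwards. Set $G(x)=\int_0^x(f-\lambda)$. Then $M_+f(x)>\lambda$ exactly when $G(y)>G(x)$ for some $y>x$, that is, when $x$ is a shadow point of $G$. The rising-sun lemma says this open set is a disjoint union of intervals $(a_k,b_k)$ with $G(a_k)=G(b_k)$, where bounded components occur because $G\to-\infty$ as $x\to+\infty$. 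A possible unbounded-left component cannot carry mass, by integrability. Hence $\int_{a_k}^{b_k}(f-\lambda)=0$ on each component, and summing gives the identity, or at least the inequality $\le$, which is all we need.

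Then the layer-cake computation finishes the proof:
\[
 \norm{M_+f}_p^p=p\int_0^\infty\lambda^{p-1}|\{M_+f>\lambda\}|\,d\lambda
 \le p\int_0^\infty\lambda^{p-2}\int_{\{M_+f>\lambda\}}f\,d\lambda
 =\frac p{p-1}\int f\,(M_+f)^{p-1}.
\]
By Hölder this is at most $p'\norm{f}_p\norm{M_+f}_p^{p-1}$. Dividing is legitimate once we know $\norm{M_+f}_p<\infty$, which follows from the classical (non-sharp) boundedness for nice $f$, or by running the argument with $\min(M_+f,K)$ and letting $K\to\infty$. A general $f\in L^p$ is handled by monotone truncations $f\ind_{[-R,R]}\min(f,R)\uparrow f$, for which $M_+$ increases to $M_+f$.

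The only delicate point is the rising-sun step: the correct characterization of $\{M_+f>\lambda\}$ and the endpoint equalities $G(a_k)=G(b_k)$, including the unbounded components. I would handle these by restricting to compactly supported $f$, where $G$ is continuous, eventually decreasing at rate $\lambda$ on the right, and increasing beyond the support on the left in a controlled way.
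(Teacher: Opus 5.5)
Your proposal is correct and is essentially the paper's proof: you dominate $\HLcenter f$ by $\tfrac12(M_+f+M_-f)$, use the rising-sun inequality $\lambda|\{M_+f>\lambda\}|\le\int_{\{M_+f>\lambda\}}f$, apply layer cake with H\"older, and finish with monotone truncation (the truncation should read $\min(f,R)\ind_{[-R,R]}$). The only difference is that you sketch the rising-sun step, which the paper cites from Grafakos--Montgomery-Smith; for $f\in L^1$ a left-unbounded component cannot occur at all, since it would force $\int_x^b f\ge\lambda(b-x)$ for every $x<b$.
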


\begin{proof}
First assume that $f\ge0$ is bounded and compactly supported. Riesz's rising-sun argument gives the following one-sided level-set identity (in this form, see \cite[Lemma~1]{GM}):
\begin{equation}\label{eq:sunrise}
 \lambda|\{M_\pm f>\lambda\}|
   =\int_{\{M_\pm f>\lambda\}}f(x)\,dx,
 \qquad \lambda>0.
\end{equation}
Layer cake, Tonelli's theorem, and H\"older's inequality give
\begin{align*}
 \norm{M_\pm f}_p^p
 &=p\int_0^\infty\lambda^{p-2}
       \int_{\{M_\pm f>\lambda\}}f(x)\,dx\,d\lambda\\
 &=p'\int_\R f(x)(M_\pm f(x))^{p-1}\,dx\\
 &\le p'\norm{f}_p\norm{M_\pm f}_p^{p-1}.
\end{align*}
The norms are finite because $M_\pm f$ is bounded and has an $O(|x|^{-1})$ tail. Hence $\norm{M_\pm f}_p\le p'\norm{f}_p$. Since
\[
 A_rf(x)=\frac12\left(\frac1r\int_{x-r}^{x}f(y)\,dy
                    +\frac1r\int_x^{x+r}f(y)\,dy\right),
\]
we have $\HLcenter f\le(M_-f+M_+f)/2$, proving \eqref{eq:upper}. For general $f$, apply the estimate to
$f_m=\min\{|f|,m\}\ind_{[-m,m]}$ and use monotone convergence, since $\HLcenter f_m\uparrow\HLcenter f$.
\end{proof}

\section{Finite martingales and centered averages}\label{sec:realization}

We begin with a localization observation, and then construct periodic functions whose centered averages reproduce a prescribed finite martingale. The separation between successive selected digits provides the two scales needed for this construction: the first coordinates remain fixed while the later coordinates are averaged.

\subsection{Localization of periodic examples}
The construction is most naturally carried out with periodic functions. The following lemma transfers a lower bound involving finitely many periodic averages to a compactly supported function on the line.

\begin{lemma}\label{lem:localization}
Let $h$ be a bounded, nonnegative, one-periodic function with $\int_0^1h^p>0$. Let $r_0,\ldots,r_n$ be positive radii such that $r_0=\max_k r_k\in\N$. For an integer $R>r_0$, put
\[
 f_R=h\ind_{[-R,R]},\qquad S(x)=\max_{0\le k\le n}A_{r_k}h(x).
\]
Then
\begin{equation}\label{eq:localization}
 \frac{\norm{\HLcenter f_R}_p^p}{\norm{f_R}_p^p}
 \ge\left(1-\frac{r_0}{R}\right)
   \frac{\displaystyle\int_0^1S(x)^p\,dx}
        {\displaystyle\int_0^1h(x)^p\,dx}.
\end{equation}
\end{lemma}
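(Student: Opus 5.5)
We compare the two sides directly on the interval $[-(R-r_0),R-r_0]$, where the truncation is invisible to every average we use. Since $R$ is an integer and $h$ is one-periodic,
\[
 \norm{f_R}_p^p=\int_{-R}^{R}h^p=2R\int_0^1h^p.
\]
For the numerator, fix $x$ with $|x|\le R-r_0$. For each $k$ we have $r_k\le r_0$, so $[x-r_k,x+r_k]\subset[-R,R]$, and there $f_R=h$. Hence
\[
 \HLcenter f_R(x)\ge\max_{0\le k\le n}A_{r_k}f_R(x)=\max_{0\le k\le n}A_{r_k}h(x)=S(x).
\]
We do not need any information about $\HLcenter f_R$ elsewhere, since the maximal function is nonnegative.

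It follows that
\[
 \norm{\HLcenter f_R}_p^p\ge\int_{-(R-r_0)}^{R-r_0}S(x)^p\,dx.
\]
The function $S$ is one-periodic, because each $A_{r_k}h$ is a centered average of the one-periodic $h$. The integration interval has integer length $2(R-r_0)$, since both $R$ and $r_0$ are integers. This is the point where the hypothesis $r_0\in\N$ is used. The right-hand side therefore equals $2(R-r_0)\int_0^1S^p$.

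Dividing the two computations gives
\[
 \frac{\norm{\HLcenter f_R}_p^p}{\norm{f_R}_p^p}\ge\frac{2(R-r_0)}{2R}\cdot\frac{\int_0^1S^p}{\int_0^1h^p}=\left(1-\frac{r_0}{R}\right)\frac{\int_0^1S^p}{\int_0^1h^p},
\]
which is \eqref{eq:localization}. The denominator is positive by the hypothesis $\int_0^1h^p>0$. All quantities are finite because $h$ is bounded, so $S\le\sup h$.

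No step is a genuine obstacle. The only points needing care are the measurability of $S$, which holds because each $A_{r_k}h$ is continuous, and the exact-period counting. The latter requires that both $R$ and $R-r_0$ be integers, which is why the lemma assumes $r_0\in\N$ and $R\in\N$.
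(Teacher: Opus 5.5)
Your proposal is correct and follows the paper's proof essentially step for step. You restrict to the interior interval $[-R+r_0,R-r_0]$, where every selected average of $f_R$ coincides with that of $h$. You then use one-periodicity of $S$ and $h$ over intervals of integer length to evaluate the numerator and denominator, and divide.
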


\begin{proof}
If $x\in[-R+r_0,R-r_0]$ and $0\le k\le n$, then
$[x-r_k,x+r_k]\subset[-R,R]$. Thus $A_{r_k}f_R(x)=A_{r_k}h(x)$, and consequently $\HLcenter f_R(x)\ge S(x)$ on this interior interval. Since $S$ is one-periodic and both endpoints are integers,
\begin{equation}\label{eq:localnumerator}
 \norm{\HLcenter f_R}_p^p
 \ge\int_{-R+r_0}^{R-r_0}S(x)^p\,dx
 =2(R-r_0)\int_0^1S(x)^p\,dx.
\end{equation}
Similarly,
\begin{equation}\label{eq:localdenominator}
 \norm{f_R}_p^p=\int_{-R}^{R}h(x)^p\,dx
 =2R\int_0^1h(x)^p\,dx>0.
\end{equation}
Dividing \eqref{eq:localnumerator} by \eqref{eq:localdenominator} proves the lemma.
\end{proof}

\subsection{The finite realization lemma}
Fix integers $B\ge2$ and $n\ge1$, and write
\[
 D_B=\{0,1,\ldots,B-1\},\qquad\Omega=D_B^n.
\]
The probability of each point of $\Omega$ is $B^{-n}$. Let $F:\Omega\to[0,\infty)$ be nonzero. For $0\le k\le n$, define
\begin{equation}\label{eq:conditional}
 F_k(u_1,\ldots,u_k)
 =\frac1{B^{n-k}}
   \sum_{w\in D_B^{n-k}}F(u_1,\ldots,u_k,w).
\end{equation}
For $k=0$ this is the constant $F_0=\E F$, and for $k=n$ the sum has one term and $F_n=F$. Equivalently, $F_k$ is the conditional expectation of $F$ given the first $k$ coordinates. In particular, $0\le F_k\le H$, where $H=\max_\Omega F$. Set
\begin{equation}\label{eq:finite-star}
 F_*(u_1,\ldots,u_n)=\max_{0\le k\le n}F_k(u_1,\ldots,u_k).
\end{equation}

\begin{lemma}[Finite realization]\label{lem:realization}
Let $1<p<\infty$ and let $B,n,F$ be as above. For every integer $s\ge1$ and every integer $R>B^s$, there is a nonzero, nonnegative, compactly supported step function $f_{s,R}$ such that
\begin{equation}\label{eq:additive}
 \frac{\norm{\HLcenter f_{s,R}}_p^p}{\norm{f_{s,R}}_p^p}
 \ge\left(1-\frac{B^s}{R}\right)
   \frac{\E F_*^p-2nB^{1-s}H^p}{\E F^p}.
\end{equation}
For the same function, if $s\ge2$, the stronger estimate
\begin{equation}\label{eq:multiplicative}
 \frac{\norm{\HLcenter f_{s,R}}_p^p}{\norm{f_{s,R}}_p^p}
 \ge\left(1-\frac{B^s}{R}\right)
   \bigl(1-2B^{1-s}\bigr)^n\frac{\E F_*^p}{\E F^p}
\end{equation}
holds. Consequently,
\begin{equation}\label{eq:transfer}
 N_p\ge\frac{(\E F_*^p)^{1/p}}{(\E F^p)^{1/p}}.
\end{equation}
\end{lemma}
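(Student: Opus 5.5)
The plan is to encode the coordinates of $F$ as widely spaced base-$B$ digits of a point $x\in[0,1)$, build a one-periodic step function from them, and then apply Lemma~\ref{lem:localization}. For $x\in[0,1)$ let $d_j(x)$ denote its $j$-th base-$B$ digit, and put
\[
 h(x)=F\bigl(d_s(x),d_{2s}(x),\ldots,d_{ns}(x)\bigr),
\]
extended one-periodically. Under Lebesgue measure on $[0,1)$ the digits are i.i.d.\ uniform on $D_B$. Hence $(d_s,\ldots,d_{ns})$ has the law of a uniform point of $\Omega$, and $\int_0^1h^p=\E F^p$. I will take $f_{s,R}=h\ind_{[-R,R]}$, which is a nonnegative compactly supported step function, nonzero since $F\neq0$.

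The radii are chosen as follows. For $k=0$ take $r_0=1$; the average over a full period gives $A_1h\equiv\E F=F_0$. For $1\le k\le n$ take $2r_k=B^{-(k+1)s+1}$.

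Two facts about these radii drive the argument.
\begin{itemize}
\item The function $x\mapsto F(\cdot,\ldots,\cdot,d_{(k+1)s}(x),\ldots,d_{ns}(x))$ depends only on digits of index $\ge (k+1)s$, so it is periodic with period $B^{-(k+1)s+1}=2r_k$.
\item Suppose $[x-r_k,x+r_k]$ stays inside the base-$B$ cell of length $B^{-ks}$ containing $x$. Then the digits $d_1,\ldots,d_{ks}$ are constant on the interval, while the later selected digits run over whole periods uniformly.
\end{itemize}
Consequently $A_{r_k}h(x)=F_k(d_s(x),\ldots,d_{ks}(x))$ at such $x$.

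The condition fails only when $x$ lies within $r_k$ of an endpoint of its level-$ks$ cell. This is a set of relative measure $2r_k/B^{-ks}\le B^{1-s}$ per endpoint side, so at most $2B^{1-s}$ in total. Call its complement the good set $G_k$.

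For the additive bound \eqref{eq:additive}, set $S=\max_kA_{r_k}h$. On $G=\bigcap_kG_k$ we have $S\ge F_*(d_s,\ldots,d_{ns})$. Off $G$ we use $F_*^p-S^p\le H^p$, and $|[0,1)\setminus G|\le 2nB^{1-s}$. This gives $\int_0^1S^p\ge\E F_*^p-2nB^{1-s}H^p$.

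Lemma~\ref{lem:localization} then applies with maximal radius $1$, giving the factor $1-1/R\ge1-B^s/R$. Notably, $R>B^s>1$.

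For the multiplicative bound \eqref{eq:multiplicative}, with $s\ge2$, I make membership in $G_k$ depend only on the unused digits. Up to the boundary conventions that need checking, $x\in G_k$ holds as soon as the block $d_{ks+1},\ldots,d_{(k+1)s-1}$ is neither all $0$ nor all $B-1$. This is a block of $s-1\ge1$ unused digits. Being in that block places $x$ at distance at least $B^{-(k+1)s+1}\ge r_k$ from both ends of its level-$ks$ cell.

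These blocks are pairwise disjoint and disjoint from the selected positions. Independence of digits then gives two facts: $|G|\ge(1-2B^{1-s})^n$, and on $G$ the selected digits still have the uniform law on $\Omega$, independent of the event $G$. Hence
\[
 \int_0^1S^p\ge\int_GF_*^p=|G|\,\E F_*^p,
\]
which is \eqref{eq:multiplicative}.

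Finally, \eqref{eq:transfer} follows by letting $R\to\infty$ and then $s\to\infty$ in \eqref{eq:additive}.

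The main obstacle is the careful check of the geometry in the second step. It has three parts: that the boundary region of each level-$ks$ cell is exactly captured by the extreme unused blocks (a relative loss of $2B^{1-s}$, not more); that the periodicity length equals $2r_k$ exactly, including the half-open endpoint conventions; and that digit-expansion ambiguities have measure zero. I would verify these first, treating $k=1$, $n=2$ as a test case.
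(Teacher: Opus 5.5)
Your proposal is correct and is essentially the paper's proof: digits at separated positions, windows covering whole periods of the future-digit function, a union bound for \eqref{eq:additive}, independence of disjoint guard blocks of unused digits for \eqref{eq:multiplicative}, and then Lemma~\ref{lem:localization}; only the parameters differ (selected positions $js$, windows of exactly one period and $r_0=1$, where the paper uses positions $1+2s(j-1)$, windows of $2B^s$ periods and $r_0=B^s$). One point to adjust: your guard blocks keep $x$ at distance $2r_k$ from the cell ends when $r_k$ suffices, so they give only a sufficient condition for $x\in G_k$ rather than the exact description you list as an obstacle (the paper's choice $r_k/\Delta_k=B^{1-s}$, with $\Delta_k$ the cell length, is what makes it exact up to endpoints); hence do the counting on the block-defined subset of $G$, where $S=F_*$ still holds and disjointness of digit positions gives exactly $(1-2B^{1-s})^n\E F_*^p$.
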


Observe that $\E F^p>0$, so every quotient in the statement is defined. The right-hand side of \eqref{eq:additive} may be negative for small $s$; the inequality is still valid in that case. In \eqref{eq:multiplicative}, $s\ge2$ implies $0\le1-2B^{1-s}<1$. This factor vanishes only when $B=2$ and $s=2$. To obtain a useful positive lower bound, one may take $s\ge3$.

\subsection{Proof of Lemma \ref{lem:realization}}
We first construct the periodic function and the averaging radii, and prove the conditional-average identity. We then estimate the set where these identities hold simultaneously. The additive estimate follows from a union bound; the multiplicative estimate follows from an exact count of the unused digits. Finally, we truncate the periodic function.

\begin{proof}
\emph{Step 1. Ordinary digits and the selected coordinates.}
For $\ell\ge1$ and $x\in[0,1)$, define the ordinary base-$B$ digit
\begin{equation}\label{eq:ordinarydigits}
 e_\ell(x)=\lfloor B^\ell x\rfloor-B\lfloor B^{\ell-1}x\rfloor.
\end{equation}
It belongs to $D_B$. This formula specifies the digit at every point, including points with a terminating expansion.

Fix $s\ge1$ and put
\begin{equation}\label{eq:scaleparameters}
 L=B^{2s},\qquad P_j=1+2s(j-1),\qquad 1\le j\le n.
\end{equation}
For all real $x$, set
\begin{equation}\label{eq:selecteddigits}
 d_j(x)=\lfloor B\{L^{j-1}x\}\rfloor,
 \qquad h_s(x)=F(d_1(x),\ldots,d_n(x)).
\end{equation}
Since
\[
 \lfloor B\{t\}\rfloor=\lfloor Bt\rfloor-B\lfloor t\rfloor,
\]
we have $d_j(x)=e_{P_j}(x)$ for $0\le x<1$. Thus the selected positions are
\[
 P_1=1,\quad P_2=2s+1,\quad\ldots,\quad P_n=2s(n-1)+1.
\]
Because $L$ is an integer, $d_j(x+1)=d_j(x)$ for every real $x$. It follows that $h_s$ is one-periodic. It is a step function with finitely many discontinuities per period, and $0\le h_s\le H$.

\smallskip
\emph{Step 2. Exact counting of the selected digit vectors.}
Let $m=P_n$, and partition $[0,1)$ into the $B^m$ half-open intervals
\begin{equation}\label{eq:cylinders}
 I_a=[aB^{-m},(a+1)B^{-m}),\qquad 0\le a<B^m.
\end{equation}
For $1\le\ell\le m$ and $x\in I_a$,
\begin{equation}\label{eq:digitoncell}
 e_\ell(x)=
 \left\lfloor\frac{a}{B^{m-\ell}}\right\rfloor
 -B\left\lfloor\frac{a}{B^{m-\ell+1}}\right\rfloor.
\end{equation}
Indeed, writing $x=(a+t)B^{-m}$ with $0\le t<1$, the addition of $t$ cannot change either of the two integer quotients in \eqref{eq:digitoncell}. The resulting vector $(e_1,\ldots,e_m)$ is precisely the length-$m$ base-$B$ word representing $a$, with leading zeros allowed.

Fix $u=(u_1,\ldots,u_n)\in D_B^n$. Imposing $d_j=u_j$ fixes the digits at the $n$ distinct positions $P_j$. The other $m-n$ digits may be chosen arbitrarily. Therefore exactly $B^{m-n}$ intervals in \eqref{eq:cylinders} have this selected vector, and
\begin{equation}\label{eq:uniformlaw}
 \bigl|\{x\in[0,1):(d_1(x),\ldots,d_n(x))=u\}\bigr|
 =B^{m-n}B^{-m}=B^{-n}.
\end{equation}
Thus the selected digits have the uniform product distribution on $D_B^n$.

In particular, for every real-valued function $\Phi$ on $D_B^n$,
\begin{equation}\label{eq:lawphi}
 \int_0^1\Phi(d_1(x),\ldots,d_n(x))\,dx
 =\frac1{B^n}\sum_{u\in D_B^n}\Phi(u)=\E\Phi.
\end{equation}
Define
\begin{equation}\label{eq:Y}
 Y_k(x)=F_k(d_1(x),\ldots,d_k(x)),\qquad
 Y_*(x)=\max_{0\le k\le n}Y_k(x),
\end{equation}
with $Y_0=\E F$. Applying \eqref{eq:lawphi} to $F^p$ and $F_*^p$ gives
\begin{equation}\label{eq:exactlaw}
 \int_0^1h_s(x)^p\,dx=\E F^p,\qquad
 \int_0^1Y_*(x)^p\,dx=\E F_*^p.
\end{equation}
The second equality uses the joint distribution of all selected digits, and hence of all conditional expectations simultaneously.

\smallskip
\emph{Step 3. The averaging radii and the past-coordinate grids.}
Choose
\begin{equation}\label{eq:radii}
 r_k=B^sL^{-k},\qquad 0\le k\le n.
\end{equation}
These radii are positive and decreasing, and their largest value is the integer $r_0=B^s$.
For $1\le k\le n$, put
\begin{equation}\label{eq:grids}
 \Delta_k=(BL^{k-1})^{-1}=B^{-P_k},\qquad
 \mathcal G_k=\Delta_k\Z.
\end{equation}
The discontinuities of $d_j$ lie in $(BL^{j-1})^{-1}\Z$. If $j\le k$, this grid is contained in $\mathcal G_k$, since $L^{k-j}$ is an integer. Thus all of the first $k$ selected digits are constant on every connected component of $\R\setminus\mathcal G_k$.

The two scale relations used below are
\begin{equation}\label{eq:scalerelations}
 \frac{r_k}{\Delta_k}=B^{1-s},\qquad
 \frac{2r_k}{L^{-k}}=2B^s\in\N.
\end{equation}
The first ratio tends to zero as $s$ increases, making $r_k$ small compared with a cell where the past digits are constant. The second makes the averaging interval an integer number of periods of the future-coordinate function. Both relations hold simultaneously because unused digit positions separate the selected coordinates.

Define the exceptional set
\begin{equation}\label{eq:badsets}
 E_k=\{x\in[0,1):\dist(x,\mathcal G_k)\le r_k\},
 \qquad 1\le k\le n.
\end{equation}
There are $BL^{k-1}$ grid points per unit period. By counting their radius-$r_k$ neighborhoods on $\R/\Z$,
\begin{equation}\label{eq:badmeasure}
 |E_k|\le2r_kBL^{k-1}=2B^{1-s}.
\end{equation}
If some neighborhoods overlap, the same upper bound holds. The definition with $\le r_k$ ensures that for $x\in[0,1)\setminus E_k$ the entire closed interval $[x-r_k,x+r_k]$ avoids every past-coordinate discontinuity.

\smallskip
\emph{Step 4. Freezing the past coordinates.}
Fix $1\le k\le n$ and $x\in[0,1)\setminus E_k$. Let
$v_j=d_j(x)$ for $1\le j\le k$, and define the auxiliary function
\begin{equation}\label{eq:auxiliary}
 g_{k,v}(y)=F(v_1,\ldots,v_k,d_{k+1}(y),\ldots,d_n(y)),
 \qquad y\in\R.
\end{equation}
Its first $k$ arguments have been fixed globally. On the selected averaging interval, however, the actual first $k$ digits are constant with precisely these values. Consequently,
\begin{equation}\label{eq:localagreement}
 h_s(y)=g_{k,v}(y),\qquad x-r_k\le y\le x+r_k.
\end{equation}
This equality is the reason we can average the auxiliary function in place of $h_s$.

For $j>k$,
\[
 L^{j-1}(y+L^{-k})=L^{j-1}y+L^{j-k-1}.
\]
The added term is an integer, so $d_j(y+L^{-k})=d_j(y)$. Thus $g_{k,v}$ has period
\begin{equation}\label{eq:futureperiod}
 \tau_k=L^{-k}.
\end{equation}
The fixed past coordinates allow this shorter period, while \eqref{eq:localagreement} permits us to use it when computing the centered average of $h_s$ on the chosen interval.

\smallskip
\emph{Step 5. The mean over one future-coordinate period.}
We claim that
\begin{equation}\label{eq:periodmean}
 \frac1{\tau_k}\int_0^{\tau_k}g_{k,v}(y)\,dy=F_k(v_1,\ldots,v_k).
\end{equation}
For $k<n$, substitute $u=L^ky$. For $1\le\ell\le n-k$,
\begin{equation}\label{eq:futurechange}
 d_{k+\ell}(L^{-k}u)=\lfloor B\{L^{\ell-1}u\}\rfloor.
\end{equation}
As $u$ runs over $[0,1)$, the right-hand sides are the same selected-digit construction, now with $n-k$ coordinates. By Step 2, their joint distribution is uniform on $D_B^{n-k}$. Therefore
\begin{align}
 \frac1{\tau_k}\int_0^{\tau_k}g_{k,v}(y)\,dy
 &=\int_0^1F\bigl(v_1,\ldots,v_k,
       \lfloor B\{u\}\rfloor,\ldots,
       \lfloor B\{L^{n-k-1}u\}\rfloor\bigr)\,du\notag\\
 &=\frac1{B^{n-k}}\sum_{w\in D_B^{n-k}}F(v_1,\ldots,v_k,w)
 =F_k(v_1,\ldots,v_k).\label{eq:periodmeancalculation}
\end{align}
When $k=n$, the auxiliary function is the constant $F(v)$, and \eqref{eq:periodmean} holds as well.

\smallskip
\emph{Step 6. Exact averaging with arbitrary interval endpoints.}
A periodic function has the same integral over every interval of one period. To make the treatment of the averaging endpoints explicit, let $g$ be locally integrable with period $\tau>0$, and write $a=m\tau+b$ with $m\in\Z$ and $0\le b<\tau$. Then
\begin{equation}\label{eq:phase}
 \int_a^{a+\tau}g(y)\,dy
 =\int_b^\tau g(y)\,dy+\int_0^b g(y)\,dy
 =\int_0^\tau g(y)\,dy.
\end{equation}
Splitting an interval of length $N\tau$ into $N$ consecutive intervals of length $\tau$, for $N\in\N$, yields
\begin{equation}\label{eq:manyperiods}
 \frac1{N\tau}\int_a^{a+N\tau}g(y)\,dy
 =\frac1\tau\int_0^\tau g(y)\,dy.
\end{equation}
The initial point $a$ need not lie on a period boundary.

Apply \eqref{eq:manyperiods} to $g_{k,v}$ with $\tau=\tau_k=L^{-k}$, $N=2B^s$, and $a=x-r_k$. Combining \eqref{eq:localagreement}, \eqref{eq:scalerelations}, and \eqref{eq:periodmean}, we obtain
\begin{equation}\label{eq:exactaverages}
 A_{r_k}h_s(x)=F_k(v_1,\ldots,v_k)=Y_k(x),
 \qquad x\in[0,1)\setminus E_k.
\end{equation}
For $k=0$, no exceptional set is required. The function $h_s$ is one-periodic, and the interval length $2r_0=2B^s$ is an integer. Therefore
\begin{equation}\label{eq:zerocoordinate}
 A_{r_0}h_s(x)=\int_0^1h_s(y)\,dy=\E F=Y_0,
 \qquad x\in\R.
\end{equation}

\smallskip
\emph{Step 7. Simultaneous realization and the additive estimate.}
Set
\begin{equation}\label{eq:goodset}
 E=\bigcup_{k=1}^nE_k,\qquad G=[0,1)\setminus E,
 \qquad S_s(x)=\max_{0\le k\le n}A_{r_k}h_s(x).
\end{equation}
All identities \eqref{eq:exactaverages} hold simultaneously on $G$, and \eqref{eq:zerocoordinate} holds everywhere. Thus
\begin{equation}\label{eq:simultaneous}
 S_s(x)=Y_*(x),\qquad x\in G.
\end{equation}
Since $0\le Y_*\le H$, the union bound and \eqref{eq:exactlaw} give
\begin{align}
 \int_0^1S_s(x)^p\,dx
 &\ge\int_GY_*(x)^p\,dx\notag\\
 &=\E F_*^p-\int_EY_*(x)^p\,dx\notag\\
 &\ge\E F_*^p-H^p\sum_{k=1}^n|E_k|
 \ge\E F_*^p-2nB^{1-s}H^p.\label{eq:additiveperiodic}
\end{align}
This part of the argument requires no independence between the exceptional set and the selected digits. It is valid for every $s\ge1$.

\smallskip
\emph{Step 8. A digit description of the good set.}
For the stronger estimate, assume from now on that $s\ge2$ and put
\begin{equation}\label{eq:guardQ}
 Q=B^{s-1}\ge2,\qquad\eta=Q^{-1}=B^{1-s}.
\end{equation}
Since $\mathcal G_k=B^{-P_k}\Z$ and $B^{P_k}r_k=\eta$, scaling the distance in \eqref{eq:badsets} gives
\begin{equation}\label{eq:scaledbad}
 x\notin E_k
 \quad\Longleftrightarrow\quad
 \dist(B^{P_k}x,\Z)>\eta
 \quad\Longleftrightarrow\quad
 \eta<\{B^{P_k}x\}<1-\eta.
\end{equation}
For $Q=2$, this condition is empty. Otherwise it removes the first and last subintervals of length $Q^{-1}$ in $[0,1)$.

The block of $s-1$ ordinary digits immediately following position $P_k$ has integer value
\begin{equation}\label{eq:blockvalue}
 a_k(x)=\left\lfloor Q\{B^{P_k}x\}\right\rfloor
 =\sum_{j=1}^{s-1}e_{P_k+j}(x)B^{s-1-j}.
\end{equation}
The equivalence
\begin{equation}\label{eq:allowedblock}
 1\le a_k(x)\le Q-2
 \quad\Longleftrightarrow\quad
 Q^{-1}\le\{B^{P_k}x\}<1-Q^{-1}
\end{equation}
is exact. When $Q>2$, it differs from \eqref{eq:scaledbad} only at points with $\{B^{P_k}x\}=Q^{-1}$. There are finitely many such points in $[0,1)$, all on base-$B$ interval boundaries. When $Q=2$, both good conditions are empty.

Consequently, up to a finite set of endpoints, $G$ is the set where every block
\begin{equation}\label{eq:blockpositions}
 \{P_k+1,\ldots,P_k+s-1\},\qquad 1\le k\le n,
\end{equation}
contains neither the all-zero word nor the all-$(B-1)$ word. Each block has $Q$ possible words, of which exactly $Q-2$ are allowed.

These blocks are pairwise disjoint. Indeed, $P_{k+1}-P_k=2s$, whereas each block has length $s-1$. They are also disjoint from every selected position $P_j$. Thus the conditions defining the good set involve only unused coordinates of the base-$B$ expansion. The last block, following $P_n$, must be included; it controls the centers used for the radius $r_n$.

\smallskip
\emph{Step 9. Exact counting on the good set.}
To include all selected digits and all blocks in \eqref{eq:blockpositions}, refine the partition to depth
\begin{equation}\label{eq:refineddepth}
 M=P_n+s-1=s(2n-1).
\end{equation}
As in Step 2, the $B^M$ half-open intervals of length $B^{-M}$ correspond exactly to the words in $D_B^M$. The endpoint differences just described lie on boundaries of these intervals and have measure zero.

Fix a selected vector $u\in D_B^n$. Its $n$ coordinates occupy the positions $P_k$. The $n$ blocks occupy another $n(s-1)$ positions. Together these account for $ns$ positions, leaving
\begin{equation}\label{eq:freepositions}
 M-ns=s(n-1)
\end{equation}
free digits. For each fixed $u$, the number of words satisfying every allowed-block condition is therefore
\begin{equation}\label{eq:goodcount}
 B^{s(n-1)}(Q-2)^n.
\end{equation}
The factor $B^{s(n-1)}$ counts the unrestricted positions, and each of the $n$ disjoint blocks contributes $Q-2$ choices. Multiplying by the cell length $B^{-M}$ yields
\begin{align}
 \bigl|G\cap\{(d_1,\ldots,d_n)=u\}\bigr|
 &=B^{-M}B^{s(n-1)}(Q-2)^n\notag\\
 &=B^{-ns}(Q-2)^n
 =B^{-n}\left(1-\frac2Q\right)^n.\label{eq:goodjointlaw}
\end{align}
Here we used $Q=B^{s-1}$ in the last equality. In particular,
\begin{equation}\label{eq:goodmeasure}
 |G|=\left(1-\frac2Q\right)^n=(1-2B^{1-s})^n.
\end{equation}
More generally, multiplying \eqref{eq:goodjointlaw} by $\Phi(u)$ and summing gives the weighted identity
\begin{equation}\label{eq:weightedgood}
 \int_G\Phi(d_1(x),\ldots,d_n(x))\,dx
 =(1-2B^{1-s})^n\E\Phi
\end{equation}
for every real-valued function $\Phi$ on $D_B^n$. Thus the good set is independent of the entire selected digit vector, in the precise sense of \eqref{eq:goodjointlaw}.

Apply \eqref{eq:weightedgood} to $\Phi=F_*^p$. Using \eqref{eq:simultaneous}, we obtain
\begin{equation}\label{eq:multiplicativeperiodic}
 \int_0^1S_s(x)^p\,dx
 \ge\int_GY_*(x)^p\,dx
 =(1-2B^{1-s})^n\E F_*^p.
\end{equation}
The distribution identity applies to $Y_*^p=F_*^p(d_1,\ldots,d_n)$; its equality with $S_s^p$ on $G$ gives the required lower bound.

\smallskip
\emph{Step 10. Compact support and the passage to the norm.}
Set
\begin{equation}\label{eq:cutoff}
 f_{s,R}(x)=h_s(x)\ind_{[-R,R]}(x).
\end{equation}
It is a nonnegative step function with finitely many discontinuities on its compact support. Equation \eqref{eq:exactlaw} and the integrality of $R$ give
\begin{equation}\label{eq:exactdenominator}
 \norm{f_{s,R}}_p^p=2R\E F^p>0.
\end{equation}
The radii in \eqref{eq:radii} satisfy all hypotheses of Lemma~\ref{lem:localization}, since their maximum is the integer $B^s$. Applying that lemma first with \eqref{eq:additiveperiodic} and then with \eqref{eq:multiplicativeperiodic} proves \eqref{eq:additive} and \eqref{eq:multiplicative}, respectively.

Since each $f_{s,R}$ is admissible in the definition of $N_p$,
\begin{equation}\label{eq:normintermediate}
 N_p^p\ge
 \left(1-\frac{B^s}{R}\right)(1-2B^{1-s})^n
 \frac{\E F_*^p}{\E F^p},\qquad s\ge2.
\end{equation}
For every fixed $B,n,F,s$, let $R\to\infty$ through integers. Then let $s\to\infty$, holding $B,n,F$ fixed. The two spatial factors tend to one, and we obtain
\[
 N_p^p\ge\frac{\E F_*^p}{\E F^p}.
\]
Taking $p$-th roots proves \eqref{eq:transfer}. These limits compare norm ratios of different admissible functions; no convergence of the functions themselves is required.
\end{proof}

\subsection{A two-coordinate example}
We illustrate the geometry of the lemma without using the example as an input to the proof. Take $B=2$, $s=3$, and $n=2$. Then $L=64$, the selected positions are $P_1=1$ and $P_2=7$, and
\[
 d_1(x)=\lfloor2\{x\}\rfloor,\qquad
 d_2(x)=\lfloor2\{64x\}\rfloor.
\]
Write the four values of $F$ as $c_{ij}=F(i,j)$, $i,j\in\{0,1\}$. Its conditional expectations are
\[
 F_0=\frac{c_{00}+c_{01}+c_{10}+c_{11}}4,
 \qquad F_1(i)=\frac{c_{i0}+c_{i1}}2,
 \qquad F_2(i,j)=c_{ij}.
\]
The selected radii are
\[
 r_0=8,\qquad r_1=\frac18,\qquad r_2=\frac1{512}.
\]
For the middle radius, the first digit is constant on each half-unit interval. The admissible centers are
\[
 x\in\left(\frac18,\frac38\right)
 \cup\left(\frac58,\frac78\right).
\]
At such a center the interval of length $2r_1=1/4$ remains in one half-unit interval and contains exactly sixteen periods of length $1/64$ of the second-digit function. Its average is therefore $F_1(d_1(x))$.

For the smallest radius, both selected digits are constant between points of $128^{-1}\Z$. Away from the $1/512$-neighborhood of that grid, $A_{r_2}h_s=h_s=F_2(d_1,d_2)$. The largest radius gives the global mean $F_0$ everywhere.

The good-set conditions use the digit blocks at positions $\{2,3\}$ and $\{8,9\}$. Each must equal either $01$ or $10$. These conditions leave the selected digits at positions $1$ and $7$ unrestricted. They give $|G|=(1/2)^2=1/4$, and each fixed selected pair occupies measure $1/16$ inside $G$. The block after the last selected digit is visible in this example: positions $8$ and $9$ are needed even though the values of $h_s$ depend only on positions $1$ and $7$.

\subsection{Comparison of the two estimates}
The multiplicative estimate \eqref{eq:multiplicative} is stronger than \eqref{eq:additive}. Indeed, for $s\ge2$, Bernoulli's inequality gives
\[
 (1-2B^{1-s})^n\ge1-2nB^{1-s},
\]
while $\E F_*^p\le H^p$. Hence the periodic lower bound in \eqref{eq:multiplicativeperiodic} is at least the one in \eqref{eq:additiveperiodic}. The relative loss in \eqref{eq:multiplicativeperiodic} depends only on $B$, $n$, and $s$. The estimate therefore remains useful when the terminal values of the martingale become large. We use this freedom below to let the depth grow with the base.

\section{A finite martingale with explicit moments}\label{sec:martingale}

We next choose the finite martingale to be realized on the line. The process increases by a fixed factor while successes continue, drops at the first failure, and then remains constant. Its growth rate is chosen so that each possible first-failure time makes the same contribution to each of the relevant $p$-th moments.

\subsection{Definition of the terminal value}
Fix $B\ge2$ and $n\ge1$, and set
\begin{equation}\label{eq:abq}
 q=1-\frac1B,\qquad a=q^{-1/p},\qquad
 b=\frac{1-qa}{1-q}
   =\frac{1-q^{1-1/p}}{1-q}.
\end{equation}
Then $a>1$, and $0<b<1$. To see the latter inequalities, put $\alpha=1-1/p\in(0,1)$. Since $q<q^\alpha<1$,
\[
 0<1-q^\alpha<1-q.
\]
We also have the identities
\begin{equation}\label{eq:martingaleidentities}
 qa+(1-q)b=1,\qquad qa^p=1.
\end{equation}

Call a coordinate a success if it belongs to $\{0,\ldots,B-2\}$, and a failure if it equals $B-1$. For $u\in D_B^n$, let
\[
 \kappa(u)=\min\{j:u_j=B-1\},
\]
with the convention $\kappa(u)=n+1$ if there is no failure. Define
\begin{equation}\label{eq:explicitF}
 F(u)=
 \begin{cases}
  ba^{\kappa(u)-1},&\kappa(u)\le n,\\
  a^n,&\kappa(u)=n+1.
 \end{cases}
\end{equation}
This is a finite, nonnegative function on the uniform product space.

\begin{lemma}\label{lem:martingale}
For the function in \eqref{eq:explicitF}, the conditional expectations in \eqref{eq:conditional} satisfy
\begin{equation}\label{eq:explicitX}
 F_k(u_1,\ldots,u_k)=
 \begin{cases}
  ba^{\kappa(u)-1},&\kappa(u)\le k,\\
  a^k,&\kappa(u)>k.
 \end{cases}
\end{equation}
Moreover,
\begin{equation}\label{eq:moments}
 \E F^p=1+\frac nB b^p,\qquad
 \E F_*^p=1+\frac nB,\qquad
 H^p=q^{-n}.
\end{equation}
\end{lemma}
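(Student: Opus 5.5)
I would prove \eqref{eq:explicitX} by a single backward step and then compute the moments by conditioning on the first-failure time $\kappa$. The only facts needed are the identities \eqref{eq:martingaleidentities} and the law of $\kappa$ under the uniform measure on $\Omega$: the coordinates are independent, each a success with probability $q$, so
\[
 \Pr(\kappa=j)=q^{j-1}(1-q)=\frac{q^{j-1}}B,\quad 1\le j\le n,\qquad \Pr(\kappa=n+1)=q^n .
\]

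\textbf{Conditional expectations.} I would argue by downward induction on $k$, using the tower relation
\[
 F_{k}(u_1,\ldots,u_k)=\frac1B\sum_{t\in D_B}F_{k+1}(u_1,\ldots,u_k,t),
\]
which follows directly from \eqref{eq:conditional}. The base case $k=n$ is the definition \eqref{eq:explicitF}. For the inductive step there are two cases.
\begin{itemize}
 \item If $\kappa(u)\le k$, then every extension has the same first failure, so all $B$ terms equal $ba^{\kappa-1}$ and so does their average.
 \item If $\kappa(u)>k$, then $B-1$ extensions are successes, each with value $a^{k+1}$, and one is a failure, with value $ba^{k}$. The average is
 \[
  qa^{k+1}+(1-q)ba^k=a^k\bigl(qa+(1-q)b\bigr)=a^k
 \]
 by \eqref{eq:martingaleidentities}.
\end{itemize}
This gives \eqref{eq:explicitX}. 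I expect this step to be mechanical.

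\textbf{Moments of $F$ and $H$.} The terminal value satisfies $F=ba^{\kappa-1}$ on $\{\kappa=j\le n\}$ and $F=a^n$ on $\{\kappa=n+1\}$. Using $qa^p=1$,
\[
 \E F^p=\sum_{j=1}^n\frac{q^{j-1}}B\,b^pa^{p(j-1)}+q^na^{pn}=\frac nB b^p+1,
\]
because each summand equals $b^p/B$. For $H$, note $b<1<a$, so the maximum of $F$ is $a^n$, and $H^p=a^{pn}=q^{-n}$.

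\textbf{Moment of the running maximum.} First I would identify $F_*$ pathwise. On $\{\kappa=j\le n\}$ the path $F_0,\ldots,F_n$ is $1,a,\ldots,a^{j-1}$ followed by the constant $ba^{j-1}<a^{j-1}$, so $F_*=a^{j-1}$. On $\{\kappa=n+1\}$ we have $F_*=a^n$. Hence
\[
 \E F_*^p=\sum_{j=1}^n\frac{q^{j-1}}B\,a^{p(j-1)}+q^na^{pn}=\frac nB+1 .
\]

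The only potentially delicate point is checking that the drop value $ba^{j-1}$ never exceeds the earlier maximum. This needs $b<1$, which was already verified after \eqref{eq:abq}; the rest is routine bookkeeping.
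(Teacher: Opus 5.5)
Your proposal is correct and follows essentially the paper's argument. The paper defines the stopped process forward, checks the one-step relation $qa^{k+1}+(1-q)ba^k=a^k$, and iterates backwards from $X_n=F$, which is your downward induction via the tower relation. Your moment computations by first-failure time, with each summand collapsing through $qa^p=1$ and $b<1<a$ identifying $F_*$ and $H$, match the paper's line for line.
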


\begin{proof}
Start at $X_0=1$. After $k$ consecutive successes, the process is active with value $a^k$. At the next success, it becomes $a^{k+1}$. At a failure, it becomes $ba^k$ and remains at that value thereafter. Given an active state, the conditional mean of the next value is
\[
 q a^{k+1}+(1-q)ba^k=a^k
\]
by \eqref{eq:martingaleidentities}. A stopped state has unchanged conditional mean. Thus $(X_k)_{k=0}^n$ is a martingale for the coordinate filtration, and $X_n=F$. Iterating the conditional-mean relation backwards gives $X_k=\E(F\mid u_1,\ldots,u_k)=F_k$, proving \eqref{eq:explicitX}.

For $0\le k<n$, the event of a first failure after exactly $k$ successes has probability $q^k(1-q)$. On this event, $F=ba^k$. The running maximum is $F_*=a^k$, since the active values have increased through $1,a,\ldots,a^k$ and the process then drops to $ba^k<a^k$. On the event of $n$ successes, of probability $q^n$, both $F$ and $F_*$ equal $a^n$. Therefore
\begin{align*}
 \E F^p
 &=\sum_{k=0}^{n-1}q^k(1-q)b^pa^{kp}+q^na^{np}
 =n(1-q)b^p+1,\\
 \E F_*^p
 &=\sum_{k=0}^{n-1}q^k(1-q)a^{kp}+q^na^{np}
 =n(1-q)+1.
\end{align*}
The simplification uses $qa^p=1$ in every summand. The largest terminal value is $a^n$, so $H^p=a^{np}=q^{-n}$. Since $1-q=B^{-1}$, these are precisely \eqref{eq:moments}.
\end{proof}

\subsection{The limiting probability-space ratio}
For fixed $B$, the exact ratio of maximal to terminal moments is
\begin{equation}\label{eq:martingaleratio}
 \frac{\E F_*^p}{\E F^p}
 =\frac{1+n/B}{1+nb^p/B}.
\end{equation}
As $n\to\infty$, this tends to $b^{-p}$. As $B\to\infty$, $q=1-B^{-1}\to1$, and
\begin{equation}\label{eq:blimit}
 b=\frac{1-q^\alpha}{1-q}\longrightarrow\alpha=1-\frac1p.
\end{equation}
For example, \eqref{eq:blimit} follows from the mean value theorem applied to $t^\alpha$ on $[q,1]$. Hence the corresponding norm ratios approach $\alpha^{-1}=p'$. The next section selects all parameters explicitly within one family of real-line functions.

\section{Proofs of the main results}\label{sec:proof}

\subsection{An explicit family of compactly supported functions}
For each integer $B\ge2$, choose
\begin{equation}\label{eq:oneparameter}
 n=B^2,\qquad s=4,\qquad L=B^8,\qquad R=B^5.
\end{equation}
Let $F^{(B)}$ be the function in \eqref{eq:explicitF} with this base and depth. Write $b_B$ for the value of $b$ in \eqref{eq:abq}. Define, for $1\le j\le B^2$,
\begin{equation}\label{eq:explicitfamily}
 d_j^{(B)}(x)=\lfloor B\{B^{8(j-1)}x\}\rfloor,
 \qquad
 f_B(x)=F^{(B)}(d_1^{(B)}(x),\ldots,d_{B^2}^{(B)}(x))
         \ind_{[-B^5,B^5]}(x).
\end{equation}
Each $f_B$ is a nonzero, nonnegative, compactly supported step function.

Apply \eqref{eq:multiplicative} and \eqref{eq:moments}. Since $n/B=B$, $B^s/R=B^{-1}$, and $B^{1-s}=B^{-3}$, we obtain
\begin{equation}\label{eq:explicitlower}
 \frac{\norm{\HLcenter f_B}_p^p}{\norm{f_B}_p^p}
 \ge\left(1-\frac1B\right)
       \left(1-\frac2{B^3}\right)^{B^2}
       \frac{1+B}{1+Bb_B^p}.
\end{equation}
Bernoulli's inequality gives
\begin{equation}\label{eq:bernoulli}
 1-\frac2B
 \le\left(1-\frac2{B^3}\right)^{B^2}\le1.
\end{equation}
Thus both spatial factors in \eqref{eq:explicitlower} converge to one. By \eqref{eq:blimit},
\begin{equation}\label{eq:momentlimit}
 \frac{1+B}{1+Bb_B^p}
 =\frac{1+B^{-1}}{b_B^p+B^{-1}}
 \longrightarrow\left(1-\frac1p\right)^{-p}=(p')^p.
\end{equation}
The multiplicative estimate makes this simultaneous choice of parameters possible: the good-set factor tends to one even though the largest value of $F^{(B)}$ grows with $B$.

\subsection{Proof of Theorem \ref{thm:main}}
\begin{proof}
Proposition~\ref{prop:upper} gives $N_p\le p'$. Every $f_B$ from \eqref{eq:explicitfamily} is admissible in the definition of $N_p$, so \eqref{eq:explicitlower}--\eqref{eq:momentlimit} imply
\[
 (p')^p\le
 \liminf_{B\to\infty}\frac{\norm{\HLcenter f_B}_p^p}{\norm{f_B}_p^p}
 \le N_p^p\le(p')^p.
\]
All quantities are nonnegative, and taking $p$-th roots proves \eqref{eq:main}. In fact, the upper bound also shows that the norm ratios themselves converge:
\begin{equation}\label{eq:familylimit}
 \lim_{B\to\infty}\frac{\norm{\HLcenter f_B}_p}{\norm{f_B}_p}=p'.
\end{equation}

For an explicit normalization, \eqref{eq:exactdenominator} and \eqref{eq:moments} give
\begin{equation}\label{eq:normalization}
 \norm{f_B}_p^p=2B^5(1+Bb_B^p),\qquad
 g_B=[2B^5(1+Bb_B^p)]^{-1/p}f_B.
\end{equation}
Then $\norm{g_B}_p=1$ and $\norm{\HLcenter g_B}_p\to p'$.

It remains to obtain smooth nonnegative functions. Fix $\varepsilon>0$ and choose $B$ so that the ratio in \eqref{eq:familylimit} is greater than $p'-\varepsilon/2$. Let $\rho\in C_c^\infty(\R)$ be nonnegative with integral one, set $\rho_\delta(x)=\delta^{-1}\rho(x/\delta)$, and define $f_{B,\delta}=\rho_\delta*f_B$. Then $f_{B,\delta}$ is nonnegative and smooth with compact support, and $f_{B,\delta}\to f_B$ in $L^p$ as $\delta\to0$.

For locally integrable $u,v$, comparison of every averaging interval gives
\[
 |\HLcenter u-\HLcenter v|\le\HLcenter(u-v)
\]
wherever the quantities are finite. Applying Proposition~\ref{prop:upper},
\begin{equation}\label{eq:stability}
 \norm{\HLcenter f_{B,\delta}-\HLcenter f_B}_p
 \le p'\norm{f_{B,\delta}-f_B}_p\longrightarrow0.
\end{equation}
The denominator norms also converge to $\norm{f_B}_p>0$, so the norm ratios converge. A sufficiently small $\delta$ gives \eqref{eq:smoothsharp}.
\end{proof}

\subsection{Proof of Corollary \ref{cor:dyadic}}
\begin{proof}
The inequality $\HLdyad f\le\HLcenter f$ and Proposition~\ref{prop:upper} give the upper bound $p'$. For the lower bound, restrict the family \eqref{eq:oneparameter} to bases $B=2^m$, $m\ge1$. Its selected radii are
\begin{equation}\label{eq:dyadicradii}
 r_k=B^{4-8k}=2^{m(4-8k)},\qquad0\le k\le B^2,
\end{equation}
so every radius belongs to the family defining $\HLdyad$. The proof of \eqref{eq:explicitlower} used only these selected averages, and therefore it gives the same lower bound for $\norm{\HLdyad f_B}_p^p/\norm{f_B}_p^p$. Taking $B\to\infty$ along powers of two proves \eqref{eq:dyadicnorm}.
\end{proof}

\subsection{A proof using the additive estimate}
For completeness, the additive estimate gives a second route to the lower bound, with successive limits. Substituting \eqref{eq:moments} into \eqref{eq:additive} yields, for finite $B,n,s,R$,
\begin{equation}\label{eq:additiveconcrete}
 N_p^p\ge
 \left(1-\frac{B^s}{R}\right)
 \frac{1+n/B-2nB^{1-s}q^{-n}}{1+nb^p/B}.
\end{equation}
One first lets $R\to\infty$ with $B,n,s$ fixed, and then $s\to\infty$ with $B,n$ fixed. This gives $N_p^p\ge(1+n/B)/(1+nb^p/B)$. Sending $n\to\infty$ and then $B\to\infty$ gives $(p')^p$.

The order of these limits is dictated by the factor $q^{-n}$ in the additive error. The multiplicative estimate avoids this dependence and permits the simultaneous choices \eqref{eq:oneparameter}.

\section{Comparison with the critical-power constant}

We now compare the norm in Theorem~\ref{thm:main} with the critical-power constant \eqref{eq:peak}. Put $\alpha=1-1/p\in(0,1)$. Strict concavity of $t^\alpha$ gives, for $t>1$,
\[
 \frac{(t+1)^\alpha+(t-1)^\alpha}{2}<t^\alpha.
\]
Therefore
\[
 \frac{(t+1)^\alpha+(t-1)^\alpha}{2t}<t^{\alpha-1}<1.
\]
The expression on the left extends continuously to $t=1$, where it equals $2^{\alpha-1}<1$, and tends to zero at infinity. Its supremum is consequently strictly less than one. Hence
\begin{equation}\label{eq:strictpeak}
 c_p<p',\qquad1<p<\infty.
\end{equation}
Thus the unrestricted $L^p(\R)$ norm strictly exceeds the optimal constant on the convex class considered in \cite{GMM}. In particular, the critical-power value conjectured in \cite{DGS} does not give the norm on the full space.

The extremizing family \eqref{eq:explicitfamily} distributes its values across many separated scales. At the selected radii, the centered averages recover the conditional means of a finite martingale. The identity \eqref{eq:goodjointlaw} ensures that each configuration of the selected digits loses exactly the same proportion of centers. The lost proportion tends to zero under \eqref{eq:oneparameter}, while the martingale norm ratio tends to $p'$.

\section*{Acknowledgements} J.M. was partially supported by the Simons Foundation Grant \#453576. The author acknowledges the use of AI tools. 
 
\begingroup
\linespread{1}\selectfont

\endgroup
\end{document}